\newcommand\newterm[1]{\textit{#1}}
\newcommand\paren[1]{(#1)}
\newcommand\ordpair\paren
\newcommand\restr[2]{#1\rvert_{#2}}
\newcommand\comp\circ
\newcommand\Zahlen{\mathbb Z}
\newcommand\Zap{\Zahlen_{>0}}
\newcommand\Quot{\mathbb Q}
\newcommand\Real{\mathbb R}
\newcommand\gengp[1]{\langle #1\rangle}
\newcommand\projsp[1]{\mathbb P^{#1}}
\newcommand\isom\cong
\DeclareMathOperator\Pic{Pic}
\newcommand\lineq\sim
\newcommand\lineqQ{\lineq_\Quot}
\newcommand\numeq\equiv
\DeclareMathOperator\vol{vol}
\newtheorem{thm}{Theorem}[section]
\crefname{thm}{Theorem}{Theorems}
\newtheorem{prop}[thm]{Proposition}
\crefname{prop}{Proposition}{Propositions}
\newtheorem{lem}[thm]{Lemma}
\crefname{lem}{Lemma}{Lemmas}
\newtheorem{cor}[thm]{Corollary}
\crefname{cor}{Corollary}{Corollaries}
\newtheorem{conj}[thm]{Question}
\crefname{conj}{Question}{Questions}
\theoremstyle{definition}
\newtheorem{defi}[thm]{Definition}
\crefname{defi}{Definition}{Definitions}
\newtheorem{exa}[thm]{Example}
\crefname{exa}{Example}{Examples}
\newtheorem*{claim}{Calim}
\crefname{claim}{Claim}{Claims}
\newtheorem*{ack}{Acknowledgements}
\theoremstyle{remark}
\newtheorem{rem}[thm]{Remark}
\crefname{rem}{Remark}{Remarks}
\title{On boundedness of indices of minimal pairs --- surfaces}
\author{Yuto Masamura}
\address{Graduate School of Mathematical Sciences, University of Tokyo, 3-8-1 Komaba, Meguro-ku, Tokyo 153-8914, Japan}
\email{masamura@ms.u-tokyo.ac.jp}
\subjclass[2020]{14J27 (Primary) 14E30, 14J32 (Secondary)}
\begin{document}

\begin{abstract}
For given positive integers $d$ and $m$, consider the projective klt pairs $\ordpair{X,B}$ of dimension $d$, of Cartier index $m$, and with semi-ample $K_X+B$ defining a contraction $\pi\colon X\to Z$.
We prove that it is not possible in general to write $n\paren{K_X+B}\lineq\pi^*A_Z$ for some $n$ depending only on $d$ and $m$, and some Cartier divisor $A_Z$ on $Z$.
\end{abstract}

\maketitle

\tableofcontents

\section{Introduction}

We work over an algebraically closed field of characteristic $0$.

In this paper, we consider the following question, which we see is not true:

\begin{conj}\label{main conj}
Let $d,m\in\Zap$.
Then there exists $n\in\Zap$ satisfying the following:
if $\ordpair{X,B}$ is a projective klt pair such that
\begin{itemize}
	\item $\dim X=d$,
	\item $m\paren{K_X+B}$ is Cartier, and
	\item $K_X+B$ is semi-ample defining a contraction $\pi\colon X\to Z$,
\end{itemize}
then
\[
	n\paren{K_X+B}\lineq\pi^*A_Z
\]
for some Cartier divisor $A_Z$ on $Z$.
\end{conj}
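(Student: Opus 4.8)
The plan is to refute \cref{main conj} already in the strongest form, for $d=2$ and $m=1$, by exhibiting a sequence of pairs $\ordpair{X_\ell,0}$ for which the least admissible $n$ grows without bound. I would take $X_\ell$ to be a smooth minimal projective surface of Kodaira dimension $1$ carrying an elliptic fibration $\pi\colon X_\ell\to\projsp1$ with a multiple fibre of multiplicity $\ell$. For such a surface $K_{X_\ell}$ is Cartier (so $m=1$, and $\ordpair{X_\ell,0}$ is klt because $X_\ell$ is smooth), and, being minimal and properly elliptic, $K_{X_\ell}$ is semi-ample with Iitaka fibration $\pi$; thus $\ordpair{X_\ell,0}$ satisfies every hypothesis of \cref{main conj}. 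It then suffices to prove that the least $n\in\Zap$ with $nK_{X_\ell}\lineq\pi^*A$ for some Cartier divisor $A$ on $\projsp1$ tends to infinity with $\ell$.

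For the construction I would invoke Kodaira's theory of elliptic surfaces: starting from a rational elliptic surface and applying logarithmic transformations at two fibres with coprime multiplicities $2$ and $\ell$ (the Dolgachev surfaces), one obtains for every odd $\ell\ge3$ a projective minimal elliptic surface $\pi\colon X_\ell\to\projsp1$ whose only multiple fibres have multiplicities $\ell$ and $2$. The canonical bundle formula for elliptic surfaces gives
\[
	K_{X_\ell}\lineq\pi^*M+\paren{\ell-1}F+F',
\]
where $M$ is a divisor on $\projsp1$ and $F,F'$ are the reduced multiple fibres, with $\pi^*P=\ell F$ for $P=\pi\paren F$ and $\pi^*P'=2F'$. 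Since $\deg\paren{M+\tfrac{\ell-1}{\ell}P+\tfrac12P'}=\tfrac12-\tfrac1\ell>0$ for $\ell\ge3$, the divisor $K_{X_\ell}\lineqQ\pi^*\paren{M+\tfrac{\ell-1}{\ell}P+\tfrac12P'}$ is the pullback of an ample $\Quot$-divisor, confirming both $\kappa\paren{X_\ell}=1$ and that $\pi$ is defined by $K_{X_\ell}$.

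The heart of the argument is the lower bound on $n$, which I would extract by restricting to $F$. Suppose $nK_{X_\ell}\lineq\pi^*A$ with $A$ Cartier on $\projsp1$. Restricting both sides to $F$ and using that $\pi\paren F$ is a point, the right-hand side yields $\restr{\pi^*A}{F}\isom\mathcal O_F$, while the displayed formula yields $\restr{nK_{X_\ell}}{F}\isom N_{F/X_\ell}^{\otimes n\paren{\ell-1}}$, since the pullback term and the disjoint fibre $F'$ restrict trivially to $F$. By Kodaira's description of multiple fibres, $N_{F/X_\ell}$ is a torsion element of $\Pic\paren F$ of order exactly $\ell$. Hence $N_{F/X_\ell}^{\otimes n\paren{\ell-1}}\isom\mathcal O_F$ forces $\ell\mid n\paren{\ell-1}$, and as $\gcd\paren{\ell,\ell-1}=1$ this gives $\ell\mid n$, so $n\ge\ell$. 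No single $n$ can then work for all $\ell$, contradicting \cref{main conj}.

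I expect the \emph{main obstacle} to lie not in the obstruction computation, which is robust, but in making the construction unimpeachable: one must ensure that the surfaces carrying a multiple fibre of arbitrarily large multiplicity can be taken \emph{projective} (a logarithmic transformation of an algebraic surface need not remain algebraic), and one must cite the order-$\ell$ normal-bundle statement in the form needed. Both are settled by the structure theory of elliptic surfaces, for which I would refer to Barth--Hulek--Peters--Van de Ven; a minor point to record along the way is that $F$ may be chosen a smooth elliptic curve, so that $\Pic\paren F$ and the torsion order are transparent.
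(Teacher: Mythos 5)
Your proposal is correct, and it refutes \cref{main conj} by a genuinely different route from the paper. Both arguments produce minimal properly elliptic surfaces over $\projsp1$ with multiple fibres and exploit the fractional contribution of those fibres to the canonical bundle formula, but they differ in the construction and, more importantly, in the obstruction. The paper constructs, for each $n$, a free quotient $X=\paren{C\times E}/\paren{\mu_2\times\mu_{4n}}$ with multiple fibres of multiplicities $2$, $4n$, $4n$ (so projectivity is automatic), and derives the contradiction by computing the degree of $n$ times the induced $\Quot$-divisor on $Z\isom\projsp1$ and observing that it is not an integer; this rules out even $nK_X\numeq\pi^*A_Z$, the stronger numerical statement recorded in \cref{main thm} and used in \cref{higher}. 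You instead take Dolgachev surfaces with multiple fibres of multiplicities $2$ and $\ell$, and obtain the obstruction by restricting to the reduced multiple fibre $F$ and invoking Kodaira's theorem that $N_{F/X}$ has order exactly $\ell$ in $\Pic F$, forcing $\ell\mid n$ and hence $n\ge\ell$. Your torsion argument only excludes linear equivalence $nK_X\lineq\pi^*A_Z$, which is all that the statement of \cref{main conj} requires (and it would still descend through the product construction of \cref{higher} by restricting to a fibre of the projection $X_1\times X_2\to X_2$), but it is formally weaker than the paper's numerical obstruction; on the other hand it isolates a local invariant of a single multiple fibre rather than a global degree computation, and so works uniformly for any elliptic surface carrying one fibre of sufficiently large multiplicity. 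The one point you rightly flag --- projectivity of the logarithmic transforms --- is settled because Dolgachev surfaces have $q=p_g=0$ and are therefore projective; the paper's quotient construction sidesteps this issue entirely by being algebraic from the start.
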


It is easy to see that \cref{main conj} is true if the following question, proposed by Hashizume \cite{hashi-symp}, on effective base point freeness of semi-ample log canonical divisors, is true.
Therefore it is concluded that the effective base point freeness also does not hold in general.

\begin{conj}[{\cite[Question 5.1]{hashi-symp}}] \label{eff}
Let $d,m\in\Zap$.
Then there exists $n\in\Zap$ satisfying the following:
if $\ordpair{X,B}$ is a projective klt pair such that
\begin{itemize}
	\item $\dim X=d$,
	\item $m\paren{K_X+B}$ is Cartier, and
	\item $K_X+B$ is semi-ample,
\end{itemize}
then $n\paren{K_X+B}$ is a base point free Cartier divisor.
\end{conj}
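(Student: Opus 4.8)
The plan is to establish \cref{eff} by descending the problem to the base $Z$ of the semi-ample contraction and then applying effective base point freeness there. Since $m\paren{K_X+B}$ is Cartier, $K_X+B$ is $\Quot$-Cartier, and being semi-ample it defines a contraction $\pi\colon X\to Z$ together with an ample $\Quot$-Cartier divisor $A$ on $Z$ with $K_X+B\lineqQ\pi^*A$. Any base point free multiple $n\paren{K_X+B}$ factors through $\pi$ and is therefore the pullback of a Cartier divisor on $Z$; conversely, for $n$ divisible by $m$ with $nA$ base point free and Cartier, $n\paren{K_X+B}\lineq\pi^*\paren{nA}$ is again base point free and Cartier. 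So it suffices to produce $n=n\paren{d,m}$ with $nA$ base point free Cartier on $Z$, which is also what is needed for \cref{main conj}.

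First I would equip $Z$ with a klt structure via the canonical bundle formula for $\pi$ (Kodaira's formula in the fibered case; in higher dimension the adjunction of Kawamata, Ambro, and Fujino--Mori), writing
\[
	A\lineqQ K_Z+B_Z+M_Z,
\]
where $\paren{Z,B_Z}$ is (sub-)klt, $B_Z$ is the discriminant divisor, and $M_Z$ is the moduli part. The coefficients of $B_Z$ are read off from the log canonical thresholds of the fibers of $\pi$, and $M_Z$ is a nef $\Quot$-Cartier divisor governed by the variation of those fibers. The heart of the argument is a bound, depending only on $d$ and $m$, for the Cartier index $r$ of $A$ on $Z$: one must control both the denominators appearing in $B_Z$ and the index of $M_Z$, so that $rA$ becomes an ample Cartier divisor on the $(\le d)$-dimensional klt pair $\paren{Z,B_Z}$.

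With $rA$ ample and Cartier I would then invoke effective base point freeness on $Z$ itself, for instance the theorems of Kollár and of Fujino for klt (or log canonical) pairs, to obtain an integer $n_0$, depending only on $\dim Z$ and $r$, for which $n_0 rA$ is base point free. Pulling back and clearing the upstairs index, the integer $n=n_0 r m$ would give $n\paren{K_X+B}\lineq\pi^*\paren{n_0 rA}$ base point free and Cartier, which is exactly the assertion of \cref{eff} (and hence, via the pullback description above, of \cref{main conj}).

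The main obstacle, and the step I expect to be decisive, is the index bound for $A$ in the middle paragraph. The difficulty is concentrated in the discriminant $B_Z$ along codimension-one points of $Z$: a fiber of $\pi$ of multiplicity $\mu$ forces the Cartier index of $A$ to be divisible by $\mu$, and it is entirely unclear that $\mu$ can be bounded in terms of the upstairs Cartier index $m$ alone. Pinning down how large $\mu$ can grow while keeping $m\paren{K_X+B}$ Cartier --- already for surfaces, where $Z$ is a curve and $\pi$ is of elliptic type --- is where the whole approach stands or falls, and so it is the point on which I would concentrate the analysis.
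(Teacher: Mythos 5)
Your proposal attempts to prove the statement affirmatively, but the statement is a \emph{question}, and the whole point of the paper is to answer it in the negative: \cref{main thm} and \cref{higher} show that \cref{eff} fails for every $d\ge2$, even with $X$ smooth and $B=0$. So no completion of your argument can exist. To your credit, the obstacle you single out at the end --- bounding the multiplicity $\mu$ of a fibre of $\pi$ in terms of the upstairs Cartier index $m$ --- is exactly where the statement dies; but it is not a technical difficulty to be overcome, it is the location of the counterexample. No such bound exists: the multiplicities of multiple fibres of an elliptic fibration are invisible to the Cartier index of $K_X+B$ upstairs, yet they appear as denominators of the discriminant downstairs.

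Concretely, following Katsura--Ueno, the paper constructs for each $n$ a minimal smooth elliptic surface $\pi\colon X\to Z\isom\projsp1$ whose multiple fibres are $\pi^*P_1=2C_1$, $\pi^*P_2=4nC_2$, $\pi^*P_3=4nC_3$ (a free quotient $X=\paren{C\times E}/G$ with $C$ the smooth projective model of $y^2=x^{4n}-1$ and $G\isom\mu_2\times\mu_{4n}$). Here $K_X$ itself is Cartier, so $m=1$ and $B=0$, yet the canonical bundle formula gives
\[
K_X\lineq\pi^*\biggl(K_Z+M_Z+\frac12P_1+\frac{4n-1}{4n}P_2+\frac{4n-1}{4n}P_3\biggr),
\]
with denominators $4n$ unbounded in terms of $m$ --- precisely the unbounded discriminant coefficients you worried about. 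For $n$ even, the quantity $n\deg\paren{K_Z+M_Z}+2n+\frac n2-\frac12$ is positive but not an integer, so $K_X$ is semi-ample defining $\pi$ while $nK_X\numeq\pi^*A_Z$ is impossible for every Cartier divisor $A_Z$ on $Z$. Combined with the correct observation in your first paragraph that any base point free Cartier multiple of $K_X+B$ must be the pullback of a Cartier divisor from $Z$ (this is how the paper deduces the failure of \cref{eff} from the failure of \cref{main conj}), this shows $nK_X$ is not base point free for the given $n$. In short: your reduction to the canonical bundle formula and to controlling the index of $A$ on $Z$ is the right diagnosis of the problem, but the correct conclusion is that this index cannot be controlled, and hence the statement you set out to prove is false.
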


It is easy to see that \Cref{main conj} is true for $d=1$, see \cref{curve}.
Furthermore we show that \cref{main conj} is true for $d=2$ when we assume the pairs to have some property:
\begin{thm}[\cref{thm for surf}]
	\Cref{main conj} is true for $d=2$ if the contractions $\pi\colon\ordpair{X,B}\to Z$ are not of elliptic type.
\end{thm}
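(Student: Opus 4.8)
The plan is to split the argument according to $\dim Z\in\{0,1,2\}$, the case $\dim Z=1$ of an elliptic fibration being exactly the one we are forced to exclude. First suppose $\dim Z=2$. Then $K_X+B$ is big and semi-ample, so $\pi$ is birational onto its ample (log canonical) model $Z$, a projective klt surface, and one has $K_X+B=\pi^*\paren{K_Z+B_Z}$ with $B_Z=\pi_*B$ and $K_Z+B_Z$ an ample $\Quot$-Cartier divisor whose coefficients again lie in $\frac1m\Zahlen$. Two-dimensional klt singularities are quotient singularities, and once the coefficients of the boundary are bounded away from $1$ (here by $1-\frac1m$) the orders of the local fundamental groups, hence the local Cartier indices of $K_Z+B_Z$, are bounded in terms of $m$ alone by the boundedness of klt surface singularities with bounded coefficients. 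Thus $n\paren{K_Z+B_Z}$ is Cartier for some $n=n\paren{m}$, and $n\paren{K_X+B}\lineq\pi^*\paren{n\paren{K_Z+B_Z}}$ as wanted. Next suppose $\dim Z=0$; then $K_X+B\lineqQ0$, $Z$ is a point, and we must produce a bounded $n$ with $n\paren{K_X+B}\lineq0$. This is the boundedness of the index of klt Calabi--Yau surface pairs of bounded Cartier index, which I would quote from the known surface theory.

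It remains to treat $\dim Z=1$, where $Z$ is a smooth projective curve and $\pi$ is a fibration whose general fibre $F$ is a curve with $\restr{\paren{K_X+B}}{F}=K_F+B_F\lineqQ0$ by adjunction. A degree count $2g\paren F-2+\deg B_F=0$ forces $g\paren F\le1$, and since $\pi$ is not of elliptic type the case $g\paren F=1$ (where necessarily $B_F=0$ and $F$ is elliptic) is excluded; hence $F\isom\projsp1$ and $\deg B_F=2$. I would then apply the canonical bundle formula to the klt-trivial fibration $\pi$, writing
\[
	K_X+B\lineqQ\pi^*\paren{K_Z+B_Z+M_Z},
\]
with $B_Z$ the discriminant part and $M_Z$ the moduli part, and set $L=K_Z+B_Z+M_Z$, an ample $\Quot$-divisor with $K_X+B\lineqQ\pi^*L$.

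The heart of the matter is that the denominators of $B_Z$ and $M_Z$ are bounded in terms of $m$ precisely because $F$ is rational. Since $\Pic\projsp1$ is torsion free, $\pi$ has no multiple fibres, so the vertical contributions to the discriminant do not produce unbounded denominators; together with the constraint that the coefficients of $B$ lie in $\frac1m\Zahlen$, this bounds the denominators of $B_Z$. The moduli part $M_Z$ for a fibration with one-dimensional fibres is controlled by the variation of Hodge structure of a cyclic cover trivialising $B_F$, and has denominator dividing a fixed integer (a multiple of $12$) by the theory of the canonical bundle formula for fibred surfaces. Consequently $nL$ is an integral, hence Cartier, divisor on the curve $Z$ for some bounded $n$, giving $n\paren{K_X+B}\lineqQ\pi^*\paren{nL}$. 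Finally I would upgrade $\lineqQ$ to $\lineq$: after replacing $n$ by a bounded multiple of $m$, the class $n\paren{K_X+B}-\pi^*\paren{nL}$ is $\Quot$-linearly equivalent to $0$ and Cartier, hence a torsion element of $\Pic X$, whose order I would bound to conclude.

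The main obstacle is this fibration case, and within it the two boundedness statements just highlighted: controlling the denominator of the moduli part $M_Z$, and bounding the order of the residual torsion class when passing from $\Quot$-linear to linear equivalence. The exclusion of elliptic type is doing real work here, since for genuine elliptic fibrations the multiple fibres $m_iF_i$ contribute terms $\frac{m_i-1}{m_i}\pi^*p_i$ to the discriminant with $m_i$ unbounded, which is exactly the mechanism producing the negative answer to \cref{main conj} in general.
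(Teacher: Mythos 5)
Your case division by $\dim Z$ matches the paper's, and the $\dim Z=0$ case (quoting the bounded index for klt Calabi--Yau surface pairs, i.e.\ \cite[Corollary 1.6]{jl}) is exactly what the paper does. The other two cases, however, each contain a genuine gap. For $\dim Z=2$: the statement you invoke --- that klt surface germs whose boundary coefficients are bounded away from $1$ have local Cartier index bounded in terms of that gap --- is false. The cyclic quotient singularities $\frac1n\paren{1,1}$ are klt with empty boundary, yet the Cartier index of $K_Z$ at such a point is unbounded in $n$. What actually bounds the index here is not the coefficient set of $B_Z$ but the hypothesis that $m\paren{K_X+B}$ is Cartier \emph{upstairs}, which constrains the discrepancies over $Z$; the paper transfers this downstairs in one stroke by applying the relative base point free theorem to $m\paren{K_X+B}$ over $Z$ (its \cref{main thm when B big}, using that $B$ is automatically $\pi$-big when $\pi$ is birational), obtaining $m\paren{K_X+B}\lineq\pi^*D_Z$ with $D_Z$ Cartier and $n=m$. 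Your route could perhaps be repaired by classifying the klt germs whose minimal-resolution discrepancies lie in $\frac1m\Zahlen$, but as written the boundedness statement you cite is not true.

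For $\dim Z=1$ the gap is more serious. Your plan ends with ``$n\paren{K_X+B}-\pi^*\paren{nL}$ is a torsion element of $\Pic X$, whose order I would bound to conclude'' --- but nothing in the argument bounds that order. The base $Z$ of a ruled-type contraction can be a curve of arbitrary genus, so $\Pic^0 Z\hookrightarrow\Pic X$ contains torsion of every order, and the canonical bundle formula only determines $L$ up to $\Quot$-linear equivalence; passing from $\lineqQ$ to $\lineq$ against the pullback of a Cartier divisor is precisely the hard point of the whole theorem, not a routine upgrade. (The bounded-denominator claim for the moduli part of a genus-$0$ klt-trivial fibration is also a nontrivial theorem --- effective adjunction in relative dimension one --- and the ``$12$'' you quote is the elliptic-fibre constant, not the relevant one here.) The paper sidesteps all of this: after reducing to a smooth relatively minimal model, $X\to Z$ is a $\projsp1$-bundle, so $-K_X$ is $\pi$-ample, hence $B\numeq_Z-K_X$ is $\pi$-ample and in particular $\pi$-big, and \cref{main thm when B big} again yields the honest linear equivalence $m\paren{K_X+B}\lineq\pi^*D_Z$ with $D_Z$ Cartier. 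Your closing observation --- that unbounded multiplicities of fibres in the elliptic case are exactly what breaks \cref{main conj} --- is correct and is indeed the mechanism of the paper's counterexample.
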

For the definition of \newterm{elliptic type}, see \cref{def type}.

On the other hand, we prove the following:
\begin{thm}[\cref{higher}]
 For every $d\ge2$, \cref{main conj} is not true in general, even if $X$ is smooth and $B=0$.
 In particular, for every $d\ge2$ \cref{eff} is not ture in general.
\end{thm}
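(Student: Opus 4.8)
The plan is to disprove \cref{main conj} for the parameters $d\ge2$ and $m=1$ (legitimate, since for smooth $X$ with $B=0$ the divisor $K_X$ is automatically Cartier). Thus it suffices to produce, for every $n\in\Zap$, a smooth projective variety $X$ of dimension $d$ with $K_X$ semi-ample defining a contraction $\pi\colon X\to Z$ for which $nK_X\not\lineq\pi^*A_Z$ for any Cartier divisor $A_Z$ on $Z$. I would do this by building, for each prime $p$, a single example $X_p$ whose \emph{index} --- the least $N\in\Zap$ with $NK_{X_p}\lineq\pi^*A_Z$ for some Cartier $A_Z$ --- is exactly $p$; letting $p\to\infty$ then shows no uniform $n$ can work. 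Such examples are necessarily of elliptic type, which is consistent with \cref{thm for surf}. The final ``in particular'' statement about \cref{eff} is then immediate by contraposition from the implication (\cref{eff} $\Rightarrow$ \cref{main conj}) recorded in the introduction.

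For the surface case ($d=2$) I would use isotrivial elliptic fibrations with multiple fibers. Fix a prime $p$, an elliptic curve $E$ with a $p$-torsion point $a$, a genus-$1$ curve $C$, and a connected $\Zahlen/p\Zahlen$-cover $h\colon C'\to C$ totally ramified over at least two points $P_1,P_2$ (such a cover exists by the Riemann existence theorem: over a genus-$1$ base at least two branch points are forced, with monodromies in $\Zahlen/p\Zahlen$ summing to zero). Let $G=\Zahlen/p\Zahlen$ act diagonally on $E\times C'$, by the free translation $t_a$ on $E$ and by the deck action on $C'$; the diagonal action is then free, so $X_p\colonequals\paren{E\times C'}/G$ is a smooth projective surface, and projection induces an isotrivial elliptic fibration $f_p\colon X_p\to C$ with a multiple fiber $pF_i$ (reduced $F_i$ a smooth elliptic curve) over each $P_i$. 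By the canonical bundle formula one has $K_{X_p}\lineq f_p^*D_0+(p-1)\paren{F_1+F_2}$ for an integral divisor $D_0$ (here $\chi(\mathcal O_{X_p})=0$, so $\deg D_0=0$), whence $K_{X_p}\lineqQ f_p^*\paren{D_0+\tfrac{p-1}{p}\paren{P_1+P_2}}$ has positive degree; consequently $K_{X_p}$ is semi-ample and $f_p$ is the contraction it defines.

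The heart of the argument is the index computation. Suppose $nK_{X_p}\lineq f_p^*A$ with $A$ Cartier. Writing $n\paren{p-1}=q_ip+s_i$ with $0\le s_i<p$ and using $pF_i\lineq f_p^*P_i$, one reduces to $s_1F_1+s_2F_2\lineq f_p^*E$ for an integral $E$, where $s_i\equiv -n\pmod p$. Restricting this equivalence to the reduced fiber $F_i$ (disjoint from $F_j$, $j\ne i$) and using that the pullback of a line bundle on $C$ is trivial on a fiber over a point, I get $s_i\,\restr{\mathcal O_{X_p}\paren{F_i}}{F_i}\isom\mathcal O_{F_i}$ in $\Pic\paren{F_i}$. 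Since $\restr{\mathcal O_{X_p}\paren{F_i}}{F_i}=\mathcal O_{F_i}\paren{F_i}$ is torsion of order exactly the multiplicity $p$ --- the classical fact about multiple fibers --- this forces $p\mid s_i$, hence $s_i=0$ and $p\mid n$. Conversely $pK_{X_p}\lineq f_p^*\paren{pD_0+\paren{p-1}\paren{P_1+P_2}}$ is a genuine pullback, so the index is exactly $p$, and \cref{main conj} fails for $(d,m)=(2,1)$.

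To reach every $d\ge3$ I would take $X_p^{(d)}=X_p\times Y$ with $Y$ a fixed smooth projective variety of dimension $d-2$ and $K_Y$ ample (for instance a product of curves of genus $\ge2$). Then $K_{X_p^{(d)}}=p_1^*K_{X_p}+p_2^*K_Y$ is semi-ample, and $\pi=f_p\times\mathrm{id}_Y\colon X_p^{(d)}\to Z\colonequals C\times Y$ is the contraction it defines, because $K_{X_p^{(d)}}=\pi^*\paren{q_C^*A_C+q_Y^*K_Y}$ with $K_{X_p}\lineqQ f_p^*A_C$, $A_C$ ample, and an exterior sum of ample divisors being ample on a product. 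Repeating the coefficient reduction and now restricting to $F_i\times Y$ --- where $\pi^*E$ becomes a pullback from $Y$ and hence trivial in the $\Pic\paren{F_i}$-direction --- again yields $p\mid n$, so the index of $X_p^{(d)}$ equals $p$ and the family is unbounded. I expect the main obstacle to be the structural input of the second and third paragraphs: realizing on a smooth surface a genuine multiple fiber of multiplicity exactly $p$ while keeping $K_X$ semi-ample and defining the fibration, and in particular verifying that $\mathcal O_{F_i}\paren{F_i}$ has order \emph{exactly} $p$. That torsion statement is precisely what upgrades ``multiplicity $p$'' into a $\Pic$-theoretic obstruction of unbounded order; once it is in place, the index computation and the passage to higher dimension via products are routine.
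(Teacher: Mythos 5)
Your argument is correct, but it takes a genuinely different route from the paper. The paper (in \cref{main thm} and \cref{higher}) also uses isotrivial elliptic surfaces with multiple fibres, but works over $Z\isom\projsp1$ with three multiple fibres of multiplicities $2,4n,4n$, and the obstruction is purely numerical: by the canonical bundle formula $K_X\lineq\pi^*D_Z$ for a $\Quot$-divisor $D_Z$ whose degree times $n$ is not an integer, so no Cartier $A_Z$ can satisfy even $nK_X\numeq\pi^*A_Z$. Your obstruction is instead the torsion order of the normal bundle $\mathcal O_{F_i}\paren{F_i}\in\Pic^0\paren{F_i}$ of the reduced multiple fibre --- a finer, numerically invisible invariant --- which rules out linear equivalence directly; this suffices to refute \cref{main conj} as stated, though it yields a slightly weaker conclusion than the paper's corollary (which excludes numerical equivalence). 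Your key input, that in characteristic $0$ the normal bundle of the reduced fibre of a multiple fibre $pF$ has order exactly $p$ (tameness), is indeed classical and can be checked by hand in your quotient construction, where $\mathcal O_{F_i}\paren{F_i}$ is the order-$p$ line bundle on $E/\gengp{t_a}$ attached to a primitive character of $\Zahlen/p\Zahlen$; note also that your genus-$1$ base is needed so that $\deg D_0=0$ and $K_{X_p}$ stays semi-ample with only two multiple fibres. The two approaches also diverge in the passage to dimension $d\ge3$: the paper multiplies by a $K$-trivial factor so that the base $Z$ remains a curve and the surface computation applies verbatim, whereas you multiply by a canonically ample factor, which changes the base to $C\times Y$ and forces the extra (but correct) step of restricting to $F_i\times\set{y}$ and invoking a seesaw-type argument. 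The paper's choice is marginally more economical; yours has the mild advantage of producing examples whose contraction target has arbitrary dimension. Your deduction of the failure of \cref{eff} by contraposition matches the paper's.
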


The case of $\dim Z=0$ in \cref{main conj}, that is, the case $K_X+B\lineqQ0$, is closely related to the index conjecture for Calabi--Yau pairs \cite[Conjecture 1.5]{jl}.
It was studied by Jiang \cite{jiang} and Xu \cite{xu,xu2}.
It was proved in full generality in dimension at most $3$ by Jiang--Liu \cite[Corollary 1.6]{jl}, and is widely expected to hold in higher dimensions (see \cite[following Corollary 1.4]{xu}).

If $\dim Z>0$, then \cref{main conj} is true when we allow the divisor $A_Z$ to be a $\Quot$-divisor (see \cite[Proposition 8.2]{xu}).

\Cref{eff} is a generalization of \cite[Theorem 1.2]{hashi}, which adds an additional assumption on $\ordpair{X,B}$ that
\begin{quote}
	there is a $\Quot$-Cartier integral divisor $A\ge0$ on $X$ such that the volume $\vol\paren{\restr AF}>0$ is fixed for general fibres $F$ of $\pi$.
\end{quote}

\begin{ack}
I am grateful to my advisor Professor Keiji Oguiso for informing me about \cref{main conj} and for his significant support in my studies.
Furthermore, I would like to thank Professor Kenta Hashizume and Professor Yoshinori Gongyo for valuable comments on the initial version of this paper.
\end{ack}

\section{Preliminaries}

\subsection{Varieties and divisors}

We assume a \newterm{variety} to be irreducible and reduced.
For varieties $X$ and $Y$, a \newterm{contraction} $f\colon X\to Y$ is a surjective projective morphism with connected fibres.

A \newterm{divisor} on a normal variety is a finite formal sum $\sum_i d_iD_i$ of prime divisors $D_i$ with integer coefficients $d_i$.
An $\Real$-divisor $D$ is \newterm{effective}, denoted by $D\ge0$, if the coefficient of any component of $D$ is positive.

Let $\pi\colon X\to S$ be a normal variety projective over a base $S$, and $D,D'$ be $\Real$-Cartier $\Real$-divisors on $X$.
We write $D\lineq_S D'$, $D\lineq_{S,\Quot}D'$ and $D\lineq_{S,\Real}D'$ for linear, $\Quot$-linear and $\Real$-linear equivalence over $S$, respectively.
We write $D\numeq_SD'$ for numerical equivalence over $S$.
If $S$ is a point, we omit the $S$.
Note that $D\lineq_{S,\Quot}D'$ (resp. $D\lineq_{S,\Real}D'$) if and only if $D-D'\lineq_\Quot\pi^*D_S$ (resp. $D-D'\lineq_\Real\pi^*D_S$) for some $\Quot$-Cartier $\Quot$-divisor (resp. $\Real$-Cartier $\Real$-divisor) $D_S$ on $S$.

Let $\pi\colon X\to S$ be a normal variety projective over $S$.
A Cartier divisor $D$ on $X$ is \newterm{base point free} (or \newterm{free}) over $S$ if the natural map $\pi^*\pi_*\mathcal O_X\paren D\to\mathcal O_X\paren D$ is surjective.
In this case, the divisor $D$ defines a contraction $f\colon X\to Z$ with a Cartier divisor $A_Z$ ample over $S$ such that $D\lineq f^*A_Z$.

A $\Quot$-Cartier $\Quot$-divisor $D$ on $X/S$ is \newterm{semi-ample} over $S$ if $nD$ is a free Cartier divisor for some $n\in\Zap$.
In this case, $D$ defines a contraction $f\colon X\to Z$ with a $\Quot$-Cartier $\Quot$-divisor $A_Z$ ample over $S$ such that $D\lineqQ f^*A_Z$.

\subsection{Pairs and singularities}

A \newterm{pair} $\ordpair{X,B}$ consists of a normal variety $X$ and an $\Real$-divisor $B\ge0$ such that $K_X+B$ is $\Real$-Cartier.

Let $\ordpair{X,B}$ be a pair.
Take a log resolution $f\colon Y\to\ordpair{X,B}$ and write
\[
	f^*\paren{K_X+B}=K_Y+B_Y.
\]
The pair $\ordpair{X,B}$ is \newterm{Kawamata log terminal} (\newterm{klt} for short), \newterm{log canonical} (\newterm{lc}) if the $\Real$-divisor $B_Y$ has coefficients $<1$, $\le1$ respectively.

\subsection{Base point free theorem}

We introduce the base point free theorem.

\begin{thm}[{cf.~\cite[Theorem 3-1-1]{kmm}}]\label{base point free}
Let $\pi\colon\ordpair{X,B}\to S$ be a klt pair projective over a quasi-projective variety $S$.
Let $D$ be a $\pi$-nef Cartier divisor on $X$.
Assume that $n_0D-\paren{K_X+B}$ is $\pi$-nef and $\pi$-big for some $n_0\in\Zap$.
Then the divisor $nD$ is base point free over $S$ for any integer $n\gg0$.
\end{thm}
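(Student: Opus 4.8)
The plan is to deduce this from Shokurov's non-vanishing theorem together with the relative Kawamata--Viehweg vanishing theorem, following the classical strategy. Since the conclusion is local on $S$ and stable under shrinking the base, I would first reduce to the case where $S$ is affine; the relative vanishing and non-vanishing statements then supply the two operations used repeatedly, namely manufacturing a section and lifting a section from a subvariety. Throughout I pass to a log resolution $f\colon Y\to X$ and write the pullback of $K_X+B$ as $K_Y+B_Y$, where the klt hypothesis guarantees that $B_Y$ has coefficients $<1$.

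The first ingredient is the non-vanishing statement: if $D$ is $\pi$-nef and $n_0D-\paren{K_X+B}$ is $\pi$-nef and $\pi$-big for some $n_0\in\Zap$, then $\pi_*\mathcal O_X\paren{nD}\ne0$ for some $n\gg0$. I would prove this by induction on $\dim X$. Using Kodaira's lemma one writes the $\pi$-big class as the sum of an ample $\Quot$-divisor and an effective one; the effective part, suitably perturbed, provides a divisor $E$ to restrict to. The positivity hypotheses are inherited by $E$, so the inductive hypothesis yields a nonzero section over $E$, and relative Kawamata--Viehweg vanishing, applied to the short exact sequence twisting by $\mathcal O_X\paren{nD-E}$, makes the restriction $\pi_*\mathcal O_X\paren{nD}\to\pi_*\mathcal O_E\paren{\restr{nD}E}$ surjective, so the section lifts to $X$. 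The case $\dim X=0$ is immediate.

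The core of the argument is to promote non-vanishing to base-point-freeness by descending induction on the dimension of the relative stable base locus $W$ of the linear systems $\lvert nD\rvert$. Assuming $W\ne\emptyset$, I would pick a prime divisor $E$ on a suitable log resolution lying over a component of $W$ and build on $Y$ a boundary of the form $K_Y+B_Y'+\paren{\text{nef and big}}$ with $B_Y'$ klt and with $E$ occurring with coefficient $1$; non-vanishing applied along $E$ followed by a vanishing-theorem lift then produces a section of some $\paren{nD}$ not vanishing identically along the image of $E$. This strictly shrinks the base locus, so iterating forces $W=\emptyset$. Finally, running this for two coprime multipliers and using that a sum of base point free divisors is again base point free yields freeness of $nD$ for all $n\gg0$.

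The main obstacle is the bookkeeping on the resolution: one must tune the coefficients of $B_Y'$ so that the pair remains klt while the chosen divisor is activated with coefficient exactly $1$, and simultaneously keep the relevant class nef and big after pulling back to $Y$ and restricting to $E$. Controlling bigness under restriction---via Kodaira's lemma and a careful perturbation of the ample part---is the delicate step; once it is in place, the vanishing theorem performs the lifting mechanically, and the passage from a single free multiple to all large $n$ is routine.
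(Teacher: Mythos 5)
The paper does not prove this statement; it is quoted directly from \cite[Theorem 3-1-1]{kmm}, and your outline is precisely the standard Kawamata--Shokurov argument given there: non-vanishing by induction on dimension via Kodaira's lemma and relative Kawamata--Viehweg vanishing, then descending induction on the stable base locus using a log resolution with a coefficient-one divisor over the base locus, and finally two coprime free multiples to get all $n\gg0$. Your sketch is correct and matches the cited source's approach, so there is nothing to add beyond the usual caveat that the multiplicity/log-canonical-threshold bookkeeping you flag as the delicate step is indeed where the real work lies.
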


\begin{cor}\label{base point free cor}
Let $\pi\colon\ordpair{X,B}\to S$ be a klt pair projective over a quasi-projective variety $S$.
Let $D$ be a $\pi$-nef Cartier divisor on $X$.
Assume that $nD-\paren{K_X+B}$ is $\pi$-nef and $\pi$-big for some $n\in\Zap$.
Then there exist a contraction $f\colon X\to Z$ over $S$ and a Cartier divisor $A_Z$ on $Z$ ample over $S$ such that
\[
D\lineq f^*A_Z.
\]
\end{cor}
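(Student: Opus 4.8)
The plan is to deduce the statement from \cref{base point free} by passing from large multiples of $D$ down to $D$ itself, using two coprime multiples to keep the equivalence integral.

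First I would apply \cref{base point free} taking its $n_0$ to be the integer $n$ given in the hypothesis: the assumptions match verbatim, so there is $k_0\in\Zap$ such that $kD$ is base point free over $S$ for every integer $k\ge k_0$. In particular $D$ is semi-ample over $S$ and hence defines a contraction $f\colon X\to Z$ over $S$. For each $k\ge k_0$ the base point free divisor $kD$ defines the same contraction $f$, since the morphism attached to a semi-ample divisor is unchanged upon passing to a positive multiple (all the $\mathrm{Proj}_S$ of the relevant Veronese subrings agree). Thus for each such $k$ there is a Cartier divisor $A_k$ on $Z$, ample over $S$, with $kD\lineq f^*A_k$.

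Next I would extract $D$ itself. Choosing the two consecutive, hence coprime, integers $k_0$ and $k_0+1$ (both $\ge k_0$), subtraction gives
\[
	D=(k_0+1)D-k_0D\lineq f^*\paren{A_{k_0+1}-A_{k_0}},
\]
so setting $A_Z:=A_{k_0+1}-A_{k_0}$, a Cartier divisor on $Z$, already yields $D\lineq f^*A_Z$ as required. It remains only to check that $A_Z$ is ample over $S$, and this is the one point needing care, since a difference of two divisors ample over $S$ need not be ample. Here I would use that $f$ is a contraction: as $f$ is surjective with $f_*\mathcal O_X=\mathcal O_Z$, the pullback $f^*$ is injective modulo numerical equivalence over $S$, because every curve in a fibre of $Z\to S$ is dominated by a curve in $X$. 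Comparing
\[
	f^*\paren{(k_0+1)A_{k_0}}\lineq k_0(k_0+1)D\lineq f^*\paren{k_0A_{k_0+1}},
\]
this injectivity gives $(k_0+1)A_{k_0}\numeq_S k_0A_{k_0+1}$, whence $A_Z=A_{k_0+1}-A_{k_0}\numeq_S\tfrac1{k_0}A_{k_0}$. Since $A_{k_0}$ is ample over $S$ and ampleness depends only on the numerical class, $A_Z$ is ample over $S$.

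The main obstacle is thus not the base point free input, which is immediate, but the passage from large multiples of $D$ back to $D$ and the verification that the difference $A_Z$ is ample over $S$. It is precisely the coprimality (here, consecutiveness) of the two chosen multiples that forces the \emph{integral} linear equivalence $D\lineq f^*A_Z$ to come out, rather than merely a $\Quot$-linear one.
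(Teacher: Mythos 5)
Your proof is correct and follows essentially the same route as the paper: apply \cref{base point free}, observe that all sufficiently large multiples of $D$ define the same contraction $f$ (the paper argues this via Zariski's main theorem rather than Veronese subrings), and subtract two consecutive multiples to obtain $D\lineq f^*\paren{A_{k_0+1}-A_{k_0}}$. Your explicit check that $A_{k_0+1}-A_{k_0}$ is ample over $S$, via injectivity of $f^*$ on numerical classes, fills in a detail the paper's proof leaves implicit.
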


\begin{proof}
By the base point free theorem (\cref{base point free}), $nD$ is free over $S$ for any $n\gg0$.
Let $f_n\colon X\to Z_n$ be the contraction over $S$ defined by $nD$, with a Cartier divisor $A_n$ on $Z_n$ ample over $S$ such that $f_n^*A_n\lineq nD$.
It follows that for a curve $C$ on $X$ contracted by $\pi$, the curve $C$ is contracted by $f_n$ if and only if $D\cdot C=0$.
Therefore by Zariski's main theorem, the $Z_n$ are isomorphic to each other, compatibly with both $X$ and $S$.
By identifying $Z_n$ and $Z_{n+1}$, we get
\[
D\lineq f_n^*\paren{A_{n+1}-A_n}.\qedhere
\]
\end{proof}

\section{Boundedness of indices of pairs of lower dimensions}

\subsection{Boundedness of indices of curves}

We see that \cref{main conj} is true for curves and can be extended to a more general setting.
For basic properties of curves, we refer to \cite[Chapter IV]{har}.

\begin{prop}\label{curve}
Let $m\in\Zap$ and let $\pi\colon\ordpair{X,B}\to Z$ be a contraction from a projective pair $\ordpair{X,B}$ such that
\begin{itemize}
	\item $\dim X=1$,
	\item $m\paren{K_X+B}$ is Cartier, and
	\item $K_X+B\numeq_Z0$.
\end{itemize}
Then
\[
	m\paren{K_X+B}\lineq\pi^*D_Z
\]
for some Cartier divisor $D_Z$ on $Z$.
\end{prop}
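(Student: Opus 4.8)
The plan is to argue according to the dimension of $Z$, which is $0$ or $1$ since $\pi$ is surjective and $\dim X=1$. I first record that $X$, being a normal projective curve, is smooth, so that $K_X$ and indeed every divisor on $X$ is Cartier, and $\deg K_X=2g-2$ where $g$ denotes the genus of $X$.

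Suppose $\dim Z=1$. Then the general fibre of $\pi$ is a finite connected scheme, i.e. a single point, so $\pi$ is birational. Since $\pi$ is a contraction we have $\pi_*\mathcal O_X\isom\mathcal O_Z$, whence $Z$ is a normal projective curve; a birational morphism between smooth projective curves is then an isomorphism, so $\pi$ is an isomorphism. I would then take $D_Z=\paren{\pi^{-1}}^*\paren{m\paren{K_X+B}}$, which is Cartier, and the desired relation $m\paren{K_X+B}\lineq\pi^*D_Z$ holds trivially. In this case the hypothesis $K_X+B\numeq_Z0$ carries no content, since no curves are contracted by $\pi$.

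The essential case is $\dim Z=0$, where $Z$ is a point, the only divisor on $Z$ is $0$, and $\pi^*D_Z=0$; thus the statement reduces to showing that $m\paren{K_X+B}$ is principal, that is $m\paren{K_X+B}\lineq0$. Here the hypothesis $K_X+B\numeq_Z0$ becomes $K_X+B\numeq0$, so $\deg\paren{K_X+B}=0$. Combining this with $\deg K_X=2g-2$ and $B\ge0$ gives $2-2g=\deg B\ge0$, hence $g\le1$. This genus bound, forced by the effectivity of $B$, is the crux of the argument and the one place where the hypotheses are used in an essential way: on a curve of higher genus a numerically trivial divisor need not be principal, so the restriction $g\le1$ cannot be omitted. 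It then remains to treat the two subcases. If $g=0$ then $X\isom\projsp{1}$, whose Picard group is $\Zahlen$ via the degree map, so the degree-zero divisor $m\paren{K_X+B}$ is automatically principal. If $g=1$ then $\deg B=0$ together with $B\ge0$ forces $B=0$, and since $K_X\lineq0$ on a genus-one curve we obtain $m\paren{K_X+B}=mK_X\lineq0$. In either case $m\paren{K_X+B}\lineq0=\pi^*D_Z$, completing the argument.
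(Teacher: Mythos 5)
Your proof is correct and follows essentially the same route as the paper: dispose of $\dim Z=1$ by noting $\pi$ is an isomorphism, and for $\dim Z=0$ use $\deg\paren{K_X+B}=0$ together with $B\ge0$ to force $g\le1$, then treat the rational case via $\Pic X\isom\Zahlen$ and the elliptic case via $B=0$, $K_X\lineq0$. The extra details you supply (why $\pi$ is an isomorphism, the explicit genus computation) are correct elaborations of what the paper leaves implicit.
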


\begin{proof}
Note that $X$ is a smooth projective curve.
If $\dim Z=1$, then the contraction $\pi\colon X\to Z$ is an isomorphism, so the theorem is clear.

Assume $\dim Z=0$.
Then $K_X+B\numeq0$, and we have to show that $m\paren{K_X+B}\lineq0$.
Since $\deg\paren{K_X+B}=0$ and $B\ge0$, we see that $X$ is either a rational curve or an elliptic curve.

First assume $X$ is a rational curve.
In this case the Picard group $\Pic X\isom\Zahlen$ is torsion-free, so $m\paren{K_X+B}$ Cartier implies that $m\paren{K_X+B}\lineq0$.

Assume $X$ is an elliptic curve.
Then we have $K_X\lineq0$ and $B=0$.
Therefore $K_X+B=K_X\lineq0$, and in particular $m\paren{K_X+B}\lineq0$.
\end{proof}

\subsection{Boundedness of indices of surfaces of special types}

In this subsection we show that \cref{main conj} is true for surfaces of special types.

First we prepare a lemma, which is a consequence of the base point free theorem:

\begin{lem}\label{main thm when B big}
Let $\ordpair{X,B}$ be a projective klt pair, $\pi\colon X\to Z$ be a contraction and $m\in\Zap$.
Assume that $m\paren{K_X+B}$ is Cartier, $K_X+B\numeq_Z0$, and $B$ is $\pi$-big.
Then
\[
	m\paren{K_X+B}\lineq \pi^*D_Z
\]
for some Cartier divisor $D_Z$ on $Z$.
\end{lem}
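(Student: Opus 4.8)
The plan is to apply the relative base point free theorem, in the packaged form of \cref{base point free cor}, to the Cartier divisor $D:=m\paren{K_X+B}$ over the base $Z$, and then to identify the contraction it produces with $\pi$ itself. Since $K_X+B\numeq_Z0$ we have $D\numeq_Z0$; in particular $D$ is $\pi$-nef, which is the first hypothesis of \cref{base point free cor}. The remaining task is to exhibit a klt boundary $\Delta$ for which $D-\paren{K_X+\Delta}$ is $\pi$-nef and $\pi$-big, and this is exactly where the hypothesis that $B$ is $\pi$-big enters.

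I would build $\Delta$ by moving a little of $B$ into the boundary. By Kodaira's lemma applied to the $\pi$-big divisor $B$, write $B\lineq_{Z,\Real}A+E$ with $A$ an $\Real$-divisor ample over $Z$ and $E\ge0$. For a small rational number $\epsilon\in\paren{0,1}$ put
\[
	\Delta:=\paren{1-\epsilon}B+\epsilon E.
\]
Then $\Delta\ge0$ and $K_X+\Delta=\paren{K_X+B}+\epsilon\paren{E-B}$ is $\Real$-Cartier, so $\ordpair{X,\Delta}$ is a pair; as $\epsilon\to0$ its coefficients and log discrepancies tend to those of the klt pair $\ordpair{X,B}$, so $\ordpair{X,\Delta}$ is klt once $\epsilon$ is small enough. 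Using $D\numeq_Z0$, $K_X+B\numeq_Z0$ and $B-E\lineq_{Z,\Real}A$, I compute
\[
	D-\paren{K_X+\Delta}\numeq_Z B-\Delta=\epsilon\paren{B-E}\numeq_Z\epsilon A,
\]
which is ample over $Z$, hence $\pi$-nef and $\pi$-big. Thus the hypotheses of \cref{base point free cor} hold for the klt pair $\ordpair{X,\Delta}$ projective over $Z$ (with $n=1$).

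\cref{base point free cor} then yields a contraction $f\colon X\to Z'$ over $Z$, say $\pi=g\comp f$ for some $g\colon Z'\to Z$, together with a Cartier divisor $A_{Z'}$ on $Z'$ ample over $Z$ such that $D\lineq f^*A_{Z'}$. To finish I identify $Z'$ with $Z$. Since $D\numeq_Z0$, every curve $C$ contracted by $\pi$ satisfies $D\cdot C=0$, so the contraction $f$ (defined by $nD$ for $n\gg0$) contracts every $\pi$-vertical curve; hence $f$ and $\pi$ contract the same curves and, exactly as in the proof of \cref{base point free cor}, Zariski's main theorem forces $g$ to be an isomorphism. Setting $D_Z:=\paren{g^{-1}}^*A_{Z'}$, a Cartier divisor on $Z$, gives $m\paren{K_X+B}=D\lineq f^*A_{Z'}\lineq\pi^*D_Z$. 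The step I expect to require the most care is the construction of $\Delta$: it is only through the $\pi$-bigness of $B$---and Kodaira's lemma---that $-\paren{K_X+\Delta}$ acquires a $\pi$-ample representative, for otherwise it would be merely $\pi$-trivial and the base point free theorem would give no information; one must also confirm that the perturbation preserves kltness.
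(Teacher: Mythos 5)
Your proposal is correct and follows essentially the same route as the paper: decompose the $\pi$-big divisor $B$ as ($\pi$-)ample plus effective, perturb the boundary to $\paren{1-\epsilon}B+\epsilon E$ to keep kltness while making $m\paren{K_X+B}-\paren{K_X+\Delta}$ relatively ample, apply \cref{base point free cor} over $Z$, and identify the resulting contraction with $\pi$. The only cosmetic differences are that the paper takes $B=A+E$ as an actual equality and identifies $Z'\isom Z$ by noting that $D_{Z'}\numeq_Z0$ forces the zero divisor to be ample over $Z$, whereas you argue via the contracted curves; both are fine.
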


\begin{proof}
Since $B$ is $\pi$-big, we can write $B=A+E$ where $E\ge0$ and $A$ is $\pi$-ample.
Since the pair $\ordpair{X,B}$ is klt, there exists a small $t>0$ such that $\ordpair{X,\paren{1-t}B+tE}$ is klt.

Apply the base point free theorem (\cref{base point free cor}) to the klt pair $\ordpair{X,\paren{1-t}B+t E}$ over $Z$ and the $\pi$-nef Cartier divisor $m\paren{K_X+B}$.
Note that
\[
	m\paren{K_X+B}-\paren{K_X+\paren{1-t}B+t E}
	\numeq_ZtA
\]
is $\pi$-ample.
Then we get a contraction $f\colon X\to Z'$ over $Z$ and a Cartier divisor $D_{Z'}$ on $Z'$ ample over $Z$ such that $m\paren{K_X+B}\lineq f^*D_{Z'}$.
Since $K_X+B\numeq_Z0$, we see that $D_{Z'}\numeq_Z0$, and therefore the zero divisor on $Z'$ is ample over $Z$.
This implies that the contraction $Z'\to Z$ is an isomorphism.
Now the lemma is proved.
\end{proof}

We define types of a contraction $\pi\colon X\to Z$ from a surface $X$, in order to simplify the statement of the theorem.

\begin{defi}\label{def type}
Let $X$ be a normal projective surface and let $\pi\colon X\to Z$ be a contraction.
\begin{enumerate}
	\item The contraction $\pi$ is said to be \newterm{of ruled type} if $\dim Z=1$ and general fibres of $\pi$ are rational curves.
	\item The contraction $\pi$ is said to be \newterm{of elliptic type} if $\dim Z=1$ and general fibres of $\pi$ are elliptic curves.
\end{enumerate}
\end{defi}

\begin{rem}\label{rem type}
Let $\ordpair{X,B}$ be a projective klt surface and let $\pi\colon X\to Z$ be a contraction with $K_X+B\numeq_Z0$ and $\dim Z=1$.
Then the contraction $\pi$ is either of ruled type or of elliptic type.
Indeed, for a general fibre $F$ of $\pi$, we have $K_F+B_F\numeq0$ and $B_F\ge0$, so $F$ is either rational or elliptic.
\end{rem}

We show that \cref{main conj} is true for surfaces whose contractions are not of elliptic type.
In fact, we have a more general result:

\begin{thm}\label{thm for surf}
Let $m\in\Zap$.
Then there exists $n\in\Zap$ satisfying the following:
if $\ordpair{X,B}$ is a projective klt surface and $\pi\colon X\to Z$ is a contraction such that
\begin{enumerate}
	\item\label{car} $m\paren{K_X+B}$ is Cartier,
	\item $K_X+B\lineq_{Z,\Quot}0$, and
	\item\label{not elliptic type} $\pi$ is not of elliptic type,
\end{enumerate}
then
\[
n\paren{K_X+B}\lineq \pi^*D_Z
\]
for some Cartier divisor $D_Z$ on $Z$.
\end{thm}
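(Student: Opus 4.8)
The plan is to split according to $\dim Z\in\{0,1,2\}$, to produce in each case a positive integer depending only on $m$ that works, and to take $n$ to be the least common multiple of the three. Passing to a multiple is harmless: if $n'\paren{K_X+B}\lineq\pi^*D_Z'$ then $kn'\paren{K_X+B}\lineq\pi^*\paren{kD_Z'}$, so the resulting $n$ works uniformly. Two of the three cases will in fact give $n=m$ directly, and only the Calabi--Yau case $\dim Z=0$ will require a genuine boundedness input.

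I would dispatch the cases $\dim Z=1$ and $\dim Z=2$ first, as both reduce to tools already in hand. When $\dim Z=1$, the hypothesis that $\pi$ is not of elliptic type together with \cref{rem type} forces $\pi$ to be of ruled type, so a general fibre $F$ is $\projsp1$; restricting and using adjunction gives $K_F+\restr BF=\restr{\paren{K_X+B}}F\lineqQ0$, whence $\deg\restr BF=-\deg K_F=2>0$. Since $F$ is a curve this means $\restr BF$ is big, i.e.\ $B$ is big over $Z$, and \cref{main thm when B big} applies verbatim to give $m\paren{K_X+B}\lineq\pi^*D_Z$. When $\dim Z=2$, the contraction $\pi$ is birational between normal projective surfaces; setting $B_Z=\pi_*B$ and fixing compatible canonical divisors, the difference $\paren{K_X+B}-\pi^*\paren{K_Z+B_Z}$ is $\pi$-exceptional and, by $K_X+B\lineq_{Z,\Quot}0$, numerically trivial over $Z$, so the negativity lemma gives the crepant equality $K_X+B=\pi^*\paren{K_Z+B_Z}$ with $\paren{Z,B_Z}$ again klt. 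As the Cartier index of a $\Quot$-Cartier divisor is unchanged under birational pullback, $m\paren{K_Z+B_Z}$ is Cartier on $Z$, and $D_Z=m\paren{K_Z+B_Z}$ gives $m\paren{K_X+B}=\pi^*D_Z$. Thus $n=m$ suffices in both cases.

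The remaining case $\dim Z=0$, where $K_X+B\lineqQ0$, is the heart of the matter: one must bound the order of $K_X+B$ in $\Pic X$ by a function of $m$ alone. The key reduction is that, evaluating $m\paren{K_X+B}$ at the generic point of each prime component of $B$ (a smooth point of $X$, where $K_X$ is Cartier), every coefficient of $B$ must lie in $\tfrac1m\Zahlen$; hence the coefficients of $B$ range over the fixed finite set $\tfrac1m\Zahlen\cap[0,1)$. The problem then becomes the boundedness of indices of klt Calabi--Yau surface pairs with coefficients in a fixed finite set, which is the index conjecture for Calabi--Yau pairs in dimension $2$; this is known by Jiang--Liu \cite[Corollary 1.6]{jl}, and yields the desired $n=n(m)$. (For surfaces one could alternatively argue directly through the classification of the minimal resolution, controlling the torsion of $\Pic X$ type by type.)

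The main obstacle is precisely this last case. In the fibration and birational cases the conclusion $n=m$ comes cheaply, from the base point free theorem (through \cref{main thm when B big}) and from birational invariance of the Cartier index respectively, and requires no control of global torsion. The Calabi--Yau case, by contrast, must bound the order of a torsion class in $\Pic X$ and so genuinely depends on the surface index theorem; this is the one place where I do not expect a self-contained elementary argument. Finally, the hypothesis that $\pi$ is not of elliptic type enters only when $\dim Z=1$, where it secures rational fibres and hence a big boundary; this is consistent with the elliptic case, in which $\deg\restr BF=0$ forces $\restr BF=0$, the boundary is never big over $Z$, and \cref{main conj} in fact fails.
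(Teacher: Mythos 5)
Your case division by $\dim Z$ and your treatment of the base case $\dim Z=0$ (reduce to the index theorem for klt Calabi--Yau surface pairs and cite \cite[Corollary 1.6]{jl}) coincide with the paper's. For $\dim Z=1$ your route is genuinely different and in fact shorter: the paper first reduces, via the negativity lemma, to the situation where $X$ is smooth and relatively minimal over $Z$, so that $X$ is a $\projsp1$-bundle and $-K_X$ is $\pi$-ample; you instead read off $\deg\restr BF=2$ on a general fibre directly, which already gives $\pi$-bigness of $B$ and lets you quote \cref{main thm when B big} with no birational modification. That shortcut is correct (general fibres lie in the smooth locus, and relative bigness over a curve is detected by the degree on the generic fibre), and it buys a cleaner argument than the paper's.

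The one genuine problem is your $\dim Z=2$ case. The crepant descent $K_X+B=\pi^*\paren{K_Z+B_Z}$ via negativity is fine, but the principle you then invoke --- that the Cartier index of a $\Quot$-Cartier divisor is unchanged under birational pullback --- is false in general. For example, let $Z$ be the cone over a polarized elliptic curve $\ordpair{E,L}$ and let $D_Z$ be the Weil divisor corresponding to $L\otimes T$ with $T$ a nontrivial $r$-torsion element of $\Pic^0\paren E$: then $D_Z$ is $\Quot$-Cartier of Cartier index exactly $r$ at the vertex, while its pullback to the blow-up of the vertex is the integral divisor $\widetilde D+E_0$, which is Cartier because the total space is smooth. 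What saves your argument is that $\ordpair{Z,B_Z}$ is klt, so $Z$ has rational (indeed quotient) surface singularities, and for rational surface singularities a $\Quot$-Cartier divisor is Cartier as soon as its pullback to a resolution is integral; you need to say this explicitly, since it is exactly the point at which the argument could fail. Alternatively you can sidestep the issue entirely by arguing as the paper does: for a birational contraction every effective divisor is $\pi$-big, so \cref{main thm when B big} applies verbatim and yields $m\paren{K_X+B}\lineq\pi^*D_Z$ with no discussion of the singularities of $Z$.
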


\begin{rem}
Note that in \cref{thm for surf} above, condition (\ref{not elliptic type}) is equivalent to that the contraction $\pi\colon X\to Z$ satisfies one of the following:
\begin{itemize}
	\item $\dim Z=0$,
	\item $\dim Z=2$, or
	\item $\pi$ is of ruled type, i.e., $\dim Z=1$ and  general fibres of $\pi$ are rational curves.
\end{itemize}
See \cref{rem type} for details.
\end{rem}

\begin{proof}
Let $\pi\colon\ordpair{X,B}\to Z$  be as in the theorem, i.e., a contraction from a projective klt surface $\ordpair{X,B}$ satisfying conditions (\ref{car})--(\ref{not elliptic type}).

First consider the case $\dim Z=0$.
This means that $K_X+B\lineqQ0$.
Then by \cite[Corollary 1.6]{jl}, there is $n\in\Zap$ depending only on $m$ such that
\[
n\paren{K_X+B}\lineq0.
\]
Hence the theorem holds when $\dim Z=0$.

Next assume $\dim Z=2$, that is, $\pi\colon X\to Z$ is birational.
Then since $B$ is $\pi$-big, we have
\[
m\paren{K_X+B}\lineq\pi^*D_Z
\]
for some Cartier divisor $D_Z$ on $Z$, by \cref{main thm when B big}.
Thus we can take $n=m$ in this case.

Assume $\dim Z=1$.
By condition (\ref{not elliptic type}), the contraction $\pi\colon X\to Z$ is of ruled type.
Now we claim the following:

\begin{claim}\label{claim: smooth rel min}
We may assume that $X$ is smooth and $\pi\colon X\to Z$ is relatively minimal, that is, $X$ has no $\paren{-1}$-curve contracted by $\pi$.
\end{claim}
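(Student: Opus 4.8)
The plan is to realise this as a birational reduction in two stages: replace $\paren{X,B}$ first by its minimal resolution and then by a relatively minimal model, keeping all the hypotheses (in particular the same Cartier index $m$) and checking that the desired linear equivalence transfers back to $X$.

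First I would pass to the minimal resolution $g\colon X'\to X$ and write $K_{X'}+B'=g^*\paren{K_X+B}$ with $B'=g_*^{-1}B-\sum_ia_iE_i$, where $a_i=a\paren{E_i;X,B}$ is the discrepancy of the exceptional curve $E_i$. Since $g$ is the minimal resolution and $B\ge0$ we have $a_i\le0$, while the klt hypothesis gives $a_i>-1$; hence $B'\ge0$ has coefficients $<1$ and $\paren{X',B'}$ is klt. Moreover $m\paren{K_{X'}+B'}=g^*\paren{m\paren{K_X+B}}$ is Cartier, $K_{X'}+B'\lineq_{Z,\Quot}0$ for the induced contraction $\pi'=\pi\comp g$, and $\pi'$ stays of ruled type because a general fibre of $\pi$ avoids the finitely many resolved points and is isomorphic to the corresponding fibre of $\pi'$. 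If the theorem is known for $\paren{X',B'}$, giving $n\paren{K_{X'}+B'}\lineq\paren{\pi'}^*D_Z=g^*\pi^*D_Z$ with $n$ a multiple of $m$, then $L=n\paren{K_X+B}-\pi^*D_Z$ is Cartier on $X$ with $g^*L\lineq0$, so the projection formula together with $g_*\mathcal O_{X'}=\mathcal O_X$ yields $\mathcal O_X\paren L\isom\mathcal O_X$, that is $n\paren{K_X+B}\lineq\pi^*D_Z$.

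Assuming now that $X$ is smooth, I would make $\pi$ relatively minimal by repeatedly contracting $\paren{-1}$-curves contained in fibres. If $C$ is such a curve, then $\paren{K_X+B}\cdot C=0$ because $C$ is contracted by $\pi$ and $K_X+B\lineq_{Z,\Quot}0$; combined with $K_X\cdot C=-1$ this forces $B\cdot C=1$. Contracting $C$ by $h\colon X\to X_1$ and setting $B_1=h_*B$, the value $B\cdot C=1$ is exactly what makes $h$ crepant, $K_X+B=h^*\paren{K_{X_1}+B_1}$; one then verifies that $B_1\ge0$, that $a\paren{C;X_1,B_1}>-1$ follows from the klt coefficient bound on $X$ so that $\paren{X_1,B_1}$ is klt, that $\pi$ factors as $\pi_1\comp h$ with $\pi_1$ again a ruled-type contraction satisfying $K_{X_1}+B_1\lineq_{Z,\Quot}0$, and that $m\paren{K_{X_1}+B_1}$ is Cartier. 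Each contraction drops the Picard number by one, so after finitely many steps the contraction becomes relatively minimal; writing $\varphi\colon X\to X''$ for the composite and using $K_X+B=\varphi^*\paren{K_{X''}+B''}$, the conclusion $n\paren{K_{X''}+B''}\lineq\paren{\pi''}^*D_Z$ pulls back directly to $n\paren{K_X+B}\lineq\pi^*D_Z$.

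The step I expect to be the main obstacle is controlling the Cartier index in the second reduction. On the smooth surfaces occurring after the first stage the condition that $m\paren{K_X+B}$ be Cartier is equivalent to $mB$ being integral, and it is this reformulation---visibly preserved by the minimal resolution and by contracting a fibral $\paren{-1}$-curve---that keeps $m$, and hence the uniformity of $n$, unchanged throughout; the equivalence is exactly what would fail for a general singular model. The accompanying crepancy bookkeeping, namely that $B_1=h_*B$ stays effective and that $a\paren{C;X_1,B_1}>-1$, is the remaining technical point, but both are forced by the computation $B\cdot C=1$ and the klt bound on the coefficient of $C$.
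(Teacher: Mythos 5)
Your proposal is correct and follows essentially the same two-stage reduction as the paper: pass to the minimal resolution (where effectivity of the new boundary comes from the non-positivity of its discrepancies, i.e.\ the negativity lemma) and then contract fibral $\paren{-1}$-curves crepantly (the paper deduces crepancy from $K_X+B\numeq_Z0$ via the negativity lemma, you via the equivalent intersection computation $\paren{K_X+B}\cdot C=0$). The additional bookkeeping you flag --- preservation of the Cartier index via integrality of $mB$ on smooth models, and descent of the linear equivalence by the projection formula --- is sound and only makes explicit what the paper leaves implicit.
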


\begin{proof}[Proof of Claim]
Consider not necessarily smooth $X$.
Take the minimal resolution $f\colon X'\to X$ of $X$.
We can see that $K_{X'}\le f^*K_X$ by the negativity lemma.
Thus if we write $f^*\paren{K_X+B}=K_{X'}+B'$, then $B'\ge0$ and therefore $\ordpair{X',B'}$ is klt.
It is easy to see that $m\paren{K_{X'}+B'}$ is Cartier, $K_{X'}+B'\lineq_{Z,\Quot}0$, and $\pi'=\pi\comp f\colon X'\to Z$ is of ruled type.
Furthermore if $n\paren{K_{X'}+B'}\lineq\paren{\pi'}^*D_Z$ for some $n$ and $D_Z$, then we have $n\paren{K_X+B}\lineq \pi^*D_Z$, so we may assume $X$ is smooth.

Assume that $X$ is smooth but has a $\paren{-1}$-curve $C$ contracted by $\pi$.
Then the contraction $\pi$ factors as
\[
\begin{tikzcd}
X\rar{f}& X_0\rar{\pi_0}& Z,
\end{tikzcd}
\]
where $f$ is the blow-down with exceptional curve $C$.
Since $K_X+B\numeq_Z0$, it follows by the negativity lemma that $K_X+B=f^*\paren{K_{X_0}+B_0}$ where $B_0=f_*B$.
Now $\ordpair{X_0,B_0}$ is klt, $m\paren{K_{X_0}+B_0}$ is Cartier, $K_{X_0}+B_0\lineq_{Z,\Quot}0$, and $\pi_0$ is of ruled type.
Moreover if $n\paren{K_{X_0}+B_0}\lineq \pi_0^*D_Z$ for some $n$ and $D_Z$, then $n\paren{K_X+B}\lineq\pi^*D_Z$.
Therefore we may assume that $X$ is relatively minimal over $Z$.
\end{proof}

Assume $X$ is smooth and relatively minimal over $Z$ in the following.
Since $\pi\colon X\to Z$ is of ruled type and $X$ is minimal over $Z$, the surface $X$ is a geometrically ruled surface over $Z$, that is, a $\projsp1$-bundle over $Z$.
Then $-K_X$ is $\pi$-ample, and therefore so is $B$.
By \cref{main thm when B big}, it follows that
\[
	m\paren{K_X+B}\lineq\pi^*D_Z
\]
for some Cartier divisor $D_Z$ on $Z$.
This proves the theorem.
\end{proof}

\section{Failure of boundedness of indices}

We see that \cref{main conj} is not true in general even in dimension $2$:

\begin{thm}\label{main thm}
Let $n\in\Zap$.
Then there exists a smooth projective surface $X$ such that
\begin{itemize}
	\item $K_X$ is semi-ample defining a contraction $\pi\colon X\to Z$, and
	\item there is no Cartier divisor $A_Z$ on $Z$ such that $nK_X\numeq\pi^*A_Z$.
\end{itemize}
In particular, for any $m,n\in\Zap$, there exists a projective klt surface $\ordpair{X,B}$ such that
\begin{itemize}
	\item $m\paren{K_X+B}$ is Cartier,
	\item $K_X+B$ is semi-ample defining a contraction $\pi\colon X\to Z$, and
	\item there is no Cartier divisor $A_Z$ on $Z$ with
		\[
			n\paren{K_X+B}\numeq\pi^* A_Z.
		\]
\end{itemize}
\end{thm}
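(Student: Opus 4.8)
The plan is to realize the counterexample in precisely the case left untouched by \cref{thm for surf}, namely an elliptic-type contraction. So I would search for a relatively minimal elliptic surface $\pi\colon X\to Z$ over a curve $Z$ in which the fractional contributions of multiple fibres to $K_X$ obstruct the numerical descent of $nK_X$, and I would make these contributions explicit by building $X$ as a free quotient.

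First I would fix, for the given $n$, an integer $m\ge 2$ that does not divide $2n$ (for instance an odd prime $m>n$), and construct $X$ as follows. Take an elliptic curve $F$, let $G=\gengp{\sigma}\isom\Zahlen/m$ act on $F$ by translation by a point of exact order $m$ (a free action), and let $B\to Z$ be a connected $\Zahlen/m$-cover of an elliptic curve $Z$ totally ramified over two points, the two local monodromies $1$ and $-1$ summing to $0$ in $\Zahlen/m$ so that the cover exists. Then $G$ acts diagonally and freely on $F\times B$, so I would set $X=\paren{F\times B}/G$, a smooth projective surface, with $\pi\colon X\to Z=B/G$ induced by the second projection. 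The two branch points should produce two multiple fibres of multiplicity $m$ with smooth reduced support, while the remaining fibres are smooth elliptic curves, so $\pi$ is of elliptic type.

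Next I would apply Kodaira's canonical bundle formula to $\pi$. Since $G$ acts freely, $\chi\paren{\mathcal O_X}=\tfrac1m\chi\paren{\mathcal O_{F\times B}}=0$; as $Z$ is elliptic and there are two multiple fibres of multiplicity $m$, the formula gives $K_X\lineqQ\pi^*D$ for a $\Quot$-divisor $D$ on $Z$ with $\deg D=\delta:=2\cdot\tfrac{m-1}{m}=2-\tfrac2m$. Because $\delta>0$, the divisor $D$ is ample, whence $K_X$ is semi-ample and the contraction it defines is exactly $\pi\colon X\to Z$. Writing $F$ for the class of a general fibre, every scheme-theoretic fibre is numerically $\numeq F$, so $\pi^*A_Z\numeq\paren{\deg A_Z}F$ for any Cartier divisor $A_Z$, with $\deg A_Z\in\Zahlen$, while the canonical bundle formula gives $K_X\numeq\delta F$ and hence $nK_X\numeq n\delta F$. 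Since $F$ is a nonzero class (it meets any multisection positively), the relation $nK_X\numeq\pi^*A_Z$ would force $n\delta=\deg A_Z\in\Zahlen$; but $n\delta=2n-\tfrac{2n}{m}\notin\Zahlen$ because $m$ does not divide $2n$. This contradiction yields the first assertion. The ``in particular'' statement then follows immediately by taking $B=0$: the surface $X$ (which depends only on $n$) is smooth, so $K_X$ is Cartier and $m\paren{K_X+0}$ is Cartier for every $m$, $K_X$ is semi-ample with contraction $\pi$, and no $A_Z$ satisfies $nK_X\numeq\pi^*A_Z$.

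I expect the main obstacle to be the geometric bookkeeping of the construction rather than the arithmetic: verifying that the diagonal $G$-action is free (so that $X$ is smooth), that the two ramification points yield genuine multiple fibres of multiplicity exactly $m$, that the class $F$ is nonzero, and — most importantly — correctly pinning down the multiple-fibre contribution in the canonical bundle formula. Once these are in place, the condition $n\delta\notin\Zahlen$ is forced by the choice of $m$. One could alternatively produce a single multiple fibre of multiplicity $m$ by a logarithmic transformation, in which case $\delta\numeq-\tfrac1m$ modulo $\Zahlen$ and one only needs that $m$ does not divide $n$; I would keep the free-quotient model as the primary route since it is completely explicit.
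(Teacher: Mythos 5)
Your proposal is correct and follows essentially the same route as the paper: both produce a relatively minimal elliptic surface as a free quotient of a product, and both use the multiple-fibre terms in the canonical bundle formula to make the degree of the $\Quot$-divisor $D$ with $K_X\lineqQ\pi^*D$ positive but with $n\deg D$ non-integral. The only differences are in the bookkeeping --- the paper takes $Z\isom\projsp1$ with three multiple fibres of multiplicities $2$, $4n$, $4n$, whereas you take an elliptic base with two multiple fibres of multiplicity $m$ not dividing $2n$ --- and your verifications (freeness of the diagonal action, $\chi\paren{\mathcal O_X}=0$, the contribution $\paren{m-1}/m$ per multiple fibre, and $F\not\numeq 0$) are all sound.
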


\begin{proof}
We follow \cite[Example 4.6]{ku}.
To prove the theorem, we may assume that $n$ is even.

Choose a minimal smooth elliptic surface $\pi\colon X\to Z$ whose multiple fibres are exactly
\[
	\pi^*P_1=2C_1,\quad\pi^*P_2=4nC_2,\quad\pi^*P_3=4nC_3,
\]
where $C_i$ are prime divisors on $X$.
Such an $X$ is constructed as follows:
Let $C$ be the smooth projective model of the affine curve defined by $y^2=x^{4n}-1$.
Consider two automorphisms of $C$ defined by
\begin{align*}
	\sigma&\colon\ordpair{x,y}\mapsto\ordpair{x,-y},\\
	\tau&\colon\ordpair{x,y}\mapsto\ordpair{\zeta x,y},
\end{align*}
where $\zeta$ is a primitive $4n$-th root of unity.
Let $G$ be the group $\gengp{\sigma,\tau}$, which is isomorphic to the product $\mu_2\times\mu_{4n}$ of cyclic groups.
Let $E$ be an elliptic curve, and let $a,b\in E$ be points of order $2$ and $4n$ respectively such that $a\ne 2nb$.
Let $G$ act on $E$ by
\[
	\sigma\colon Q\mapsto Q+a,\quad \tau\colon Q\mapsto Q+b.
\]
Then the action of $G$ on $C\times E$ is free and we get a smooth elliptic surface
\[
\begin{tikzcd}
	X=\paren{C\times E}/G\rar{\pi}& C/G=Z\isom\projsp1.
\end{tikzcd}
\]
We see that $\paren{C\times E}/\gengp\sigma\to C/\gengp\sigma$ is an elliptic surface having $4n$ multiple fibres of multiplicity $2$.
The automorphism $\tau$ acts on this elliptic surface, and acts transitively on the $4n$ multiple fibres.
Furthermore the action of $\tau$ on $C/\gengp\sigma$ fixes two points on which the multiple fibres do not lie.
Therefore the surface $X\to Z$ satisfies the desired property.

By the canonical bundle formula, we have
\[
	K_X\lineq\pi^*\biggl( K_Z+M_Z+\frac12P_1+\frac{4n-1}{4n}P_2+\frac{4n-1}{4n}P_3\biggr)
\]
for some nef Cartier divisor $M_Z$.
Since $n$ is even, we see that
\begin{align*}
	&n\deg \biggl(K_Z+M_Z+\frac12P_1+\frac{4n-1}{4n}P_2+\frac{4n-1}{4n}P_3\biggr)\\
	&=n\deg\paren{K_Z+M_Z}+2n+\frac n2-\frac12
\end{align*}
is positive and is not an integer.
Therefore $K_X$ is semi-ample and defines $\pi$, and there is no Cartier divisor $A_Z$ on $Z$ such that $nK_X\numeq\pi^*A_Z$.
\end{proof}

We give other constructions of surfaces $\ordpair{X,B}$ as in \cref{main thm} in the following two examples.

\begin{exa}
Let $m,n\in\Zap$.
Assume $m\ge2$ and $n=mm'$ for some $m'\in\Zap$.
We construct a projective klt surface $\ordpair{X,B}$ satisfying the properties in \cref{main thm}.

Take a minimal smooth elliptic surface $\pi\colon X\to Z$ that has a multiple fibre
\[
	\pi^*P=nC
\]
where $C$ is a smooth irreducible curve on $X$, and such that the multiplicity of any singular fibre of $\pi$ divides $n$.
Indeed, we can construct such an $X$ as follows:
Let $E$ be an elliptic curve, and let $\mu_{n}$ be the cyclic group of order $n$.
Let $\mu_{n}$ act on $E$ by translation by an element of order $n$, and act on $\projsp1$ by $x\mapsto\zeta x$, where $x$ is the non-homogeneous coordinate of $\projsp1$ and $\zeta$ is a primitive $n$-th root of unity.
We then let
\[
	X=\paren{E\times\projsp1}/\mu_n.
\]

By the canonical bundle formula, we can write $K_X\lineq\pi^*D_Z$ for some $\Quot$-divisor $D_Z$ on $Z$.
It follows that $nD_Z$ is integral by the assumption on multiplicities of singular fibres of $\pi$.
Choose sufficiently many general points $P_1,\dotsc,P_l\in Z$ so that $D_Z+\sum_i P_i/m$ is ample and the pair
\[
	\biggl(X,\frac1m\sum_i\pi^*P_i+\frac1mC\biggr)=:\ordpair{X,B}
\]
is klt (since $m\ge2$).
Then $m\paren{K_X+B}$ is Cartier and $K_X+B$ is semi-ample defining $\pi$.
Furthermore, since $n=mm'$, we have
\[
	n\paren{K_X+B}
	\lineq\pi^*\biggl(nD_Z+m'\sum_iP_i+\frac1mP\biggr).
\]
Since the degree of $nD_Z+m'\sum_iP_i+P/m$ is not an integer (since $m\ge2$), we cannot write
\[
	n\paren{K_X+B}\numeq\pi^*A_Z
\]
for any Cartier divisor $A_Z$ on $Z$.
\end{exa}

\begin{exa}
Let $m,n\in\Zap$.
Assume $m\ge2$ and $n=mm'$ for some $m'\in\Zap$.
We construct, following Hashizume \cite{hashi-symp}, a projective klt surface $\ordpair{X,B}$ satisfying the properties in \cref{main thm}.

By his example \cite[Example 5.5]{hashi-symp} (see also \cite[Example 3.1]{birkar}), there exist a smooth projective surface $X$ and a contraction $\pi\colon X\to\projsp1=Z$ such that $-2nK_X\lineq\pi^*P$ ($P\in Z$  a closed point).
This is constructed by blowing up $\projsp2$ at nine points.
Now choose a general point $Q\in Z$ so that $\ordpair{X,\pi^*Q/m}=\ordpair{X,B}$ is klt.
Then $m\paren{K_X+B}$ is Cartier and $K_X+B$ is semi-ample defining the contraction $\pi$, but since
\[
	n\paren{K_X+B}\lineq\pi^*\biggl(-\frac12P+m'Q\biggr),
\]
there is no Cartier divisor $A_Z$ on $Z$ such that $n\paren{K_X+B}\numeq\pi^* A_Z$.
\end{exa}

As a corollary we see that \cref{main conj} is not true for every $d\ge2$:

\begin{cor}\label{higher}
Let $d,m,n\in\Zap$, $d\ge2$.
Then there exists a projective klt pair $\ordpair{X,B}$ such that
\begin{itemize}
	\item $\dim X=d$,
	\item $m\paren{K_X+B}$ is Cartier,
	\item $K_X+B$ is semi-ample defining a contraction $\pi\colon X\to Z$, and
	\item there exists no Cartier divisor $A_Z$ on $Z$ with
		\[
			n\paren{K_X+B}\numeq\pi^*A_Z.
		\]
\end{itemize}
Furthermore we can assume that $X$ is smooth and $B=0$.
\end{cor}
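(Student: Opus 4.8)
The plan is to bootstrap from the surface case of \cref{main thm} by taking a product with a variety of trivial canonical class. Fix $d\ge2$ and the given $n\in\Zap$, and let $X_0$ be the smooth projective surface furnished by \cref{main thm} for this $n$: thus $K_{X_0}$ is semi-ample, defines a contraction $\pi_0\colon X_0\to Z_0$, and there is no Cartier divisor $A$ on $Z_0$ with $nK_{X_0}\numeq\pi_0^*A$. I would then choose a smooth projective variety $Y$ of dimension $d-2$ with $K_Y\lineq0$ --- for instance a product of $d-2$ elliptic curves, or a point when $d=2$ --- and set $X=X_0\times Y$ with projections $p_1\colon X\to X_0$ and $p_2\colon X\to Y$. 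Then $X$ is smooth of dimension $d$ and $B=0$, so $\ordpair{X,0}$ is klt and $m\paren{K_X+B}=mK_X$ is automatically Cartier for every $m$.

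First I would record that $K_X\lineq p_1^*K_{X_0}+p_2^*K_Y\lineq p_1^*K_{X_0}$, using the linear triviality $K_Y\lineq0$. Hence $nK_X\lineq p_1^*\paren{nK_{X_0}}$ is base point free for $n\gg0$, so $K_X$ is semi-ample. More precisely, by the K\"unneth formula together with $h^0\paren{Y,mK_Y}=h^0\paren{Y,\mathcal O_Y}=1$, one has $H^0\paren{X,mK_X}\isom H^0\paren{X_0,mK_{X_0}}$ for all $m$, so the graded section ring of $K_X$ coincides with that of $K_{X_0}$. Consequently the contraction defined by $K_X$ is $\pi:=\pi_0\comp p_1\colon X\to Z_0=:Z$; its fibres $\pi_0^{-1}(z)\times Y$ are connected, confirming that $\pi$ is a contraction onto the same base $Z=Z_0$.

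Now suppose, toward a contradiction, that $nK_X\numeq\pi^*A_Z$ for some Cartier divisor $A_Z$ on $Z$. I would restrict this numerical equivalence to a slice $X_0\times\set{y}$ for a closed point $y\in Y$. Under the isomorphism $X_0\times\set{y}\isom X_0$ we have $\restr{\paren{p_1^*K_{X_0}}}{X_0\times\set{y}}=K_{X_0}$ and $\restr{\paren{\pi^*A_Z}}{X_0\times\set{y}}=\pi_0^*A_Z$, and numerical equivalence restricts to a subvariety since it is tested against curves lying on it. Therefore $nK_{X_0}\numeq\pi_0^*A_Z$ with $A_Z$ a Cartier divisor on $Z_0$, contradicting the choice of $X_0$ from \cref{main thm}. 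This shows no such $A_Z$ exists and completes the proof.

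I expect the only genuinely delicate point to be the identification of the contraction attached to $K_X$ with $\pi_0\comp p_1$: this rests on the \emph{linear} (not merely numerical) triviality $K_Y\lineq0$, which forces the section rings of $K_X$ and $K_{X_0}$ to agree, and for this reason it matters that $Y$ be chosen with $K_Y\lineq0$ rather than only $K_Y\numeq0$. Everything else is routine: the smoothness, dimension count, and klt property are immediate, and the descent of the numerical non-equivalence to $X_0$ is a one-line restriction to a product slice.
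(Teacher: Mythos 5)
Your proposal is correct and follows essentially the same route as the paper: take the surface $X_0$ (smooth, $B=0$) from \cref{main thm}, multiply by a smooth projective variety of dimension $d-2$ with linearly trivial canonical class, and descend the numerical non-equivalence back to $X_0$. The paper is terser about why the contraction defined by $K_X$ is $\pi_0\comp p_1$ and why the numerical equivalence restricts, but your added justifications are exactly the right ones.
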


\begin{proof}
Choose a surface $\pi_1\colon\ordpair{X_1,B_1}\to Z$ as in \cref{main thm}, that is, $\ordpair{X_1,B_1}$ is a projective klt surface such that $m\paren{K_{X_1}+B_1}$ is Cartier, $K_{X_1}+B_1$ is semi-ample defining a contraction $\pi_1$ to a curve $Z$, and there is no Cartier divisor $A_Z$ on $Z$ with $n\paren{K_{X_1}+B_1}\numeq\pi_1^*A_Z$.
Moreover choose a smooth projective variety $X_2$ of dimension $d-2$ with $K_{X_2}\lineq0$.
Let $X=X_1\times X_2$ and write
\[
	p^*\paren{K_{X_1}+B_1}=K_X+B,
\]
where $p\colon X\to X_1$ is the projection.
Then $\ordpair{X,B}$ is klt of dimension $d$, the divisor $m\paren{K_X+B}$ is Cartier, and $K_X+B$ is semi-ample defining the contraction $\pi=\pi_1\comp p\colon X\to Z$.
Now if
\[
	n\paren{K_X+B}\numeq\pi^*A_Z
\]
for some Cartier divisor $A_Z$ on $Z$, then it follows that $n\paren{K_{X_1}+B_1}\numeq\pi_1^*A_Z$, which is a contradiction.

Furthermore, we can make $X_1$ to be smooth and $B_1=0$ by \cref{main thm}.
Then it follows that $X$ is smooth and $B=0$.
\end{proof}


\providecommand{\bysame}{\leavevmode\hbox to3em{\hrulefill}\thinspace}
\providecommand{\MR}{\relax\ifhmode\unskip\space\fi MR }
\providecommand{\MRhref}[2]{%
  \href{http://www.ams.org/mathscinet-getitem?mr=#1}{#2}
}
\providecommand{\href}[2]{#2}

\end{document}